%
%

\NeedsTeXFormat{LaTeX2e}

\documentclass[12pt]{article}

\usepackage{amsmath}
\usepackage{amsfonts}
\usepackage{amsthm}
\usepackage{mathrsfs}

\def\bibname{}

\newtheorem{theorem}{Theorem}[section]

\newtheorem{corollary}[theorem]{Corollary}
\newtheorem{proposition}[theorem]{Proposition}
\newtheorem{definition}[theorem]{Definition}




\def\R{\mathbb R}
\def\N{\mathbb N}

\def\ds{{\rm d}s}
\def\d{{\rm d}}
\def\qqfa{\quad{\rm for\ all}\quad}

\def\<{\langle}
\def\>{\rangle}
\def\e{{\rm e}}
\def\dev{{\rm dev}}

\def\op{{\rm op}}
\def\dist{{\rm dist}}

\def\per{{\rm per}}

\def\Re{\mathbb R{\rm e}}

\begin{document}

\title
 {Log-Lipschitz continuity of the vector field on the attractor of
certain parabolic equations}

\author{Eleonora Pinto de Moura and James C. Robinson\\
   Mathematics Institute\\ University of Warwick, Coventry\\ CV4 7AL, UK}



\maketitle

\begin{abstract}
\noindent We discuss various issues related to the
finite-dimensionality of the asymptotic dynamics of solutions of
parabolic equations. In particular, we study the regularity of the
vector field on the global attractor associated with these
equations. We show that certain dissipative partial differential
equations possess a linear term that is log-Lipschitz continuous on
the attractor. We then prove that this property implies that the
associated global attractor $\mathcal A$ lies within a small
neighbourhood of a smooth manifold, given as a Lipschitz graph over
a finite number of Fourier modes. Consequently, the global attractor
$\mathcal A$ has zero Lipschitz deviation and, therefore, there are
linear maps $L$ into finite-dimensional spaces, whose inverses
restricted to $L\mathcal A$ are H\"older continuous with an exponent
arbitrarily close to one.

\end{abstract}

\section{Introduction}

The existence of global attractors with finite upper box-counting
dimension for a wide class of dissipative equations (see Babin and
Vishik \cite{BaVi}, Foias and Temam \cite{FT}, Hale \cite{Ha}, Temam
\cite{T}, for example) strongly suggests that it might be possible
to construct a system of ordinary differential equations whose
asymptotic dynamics reproduces the dynamics on the original
attractor. However, because of the complexity of the flow on the
attractor $\mathcal A$ and its irregular structure, the finite
dimensionality of $\mathcal A$ alone is not immediately sufficient
to guarantee the existence of such a system of ordinary differential
equations.

Indeed, the existence of an ordinary differential equation with
analogous asymptotic dynamics has only been proved for dissipative
partial differential equations that possess an inertial manifold,
i.e. a finite-dimensional, positively invariant Lipschitz manifold
that attracts all orbits exponentially (see Constantin and Foias
\cite{CF}, Constantin \textit{et al.}\ \cite{CFNT}, Foias, Manley
and Temam \cite{FMT}, Foias, Sell and Temam \cite{FSTb}, Temam
\cite{T}, for more details). All the methods available in the
literature construct inertial manifolds as graphs of functions from
a finite-dimensional eigenspace associated with the low Fourier
modes into the complementary infinite-dimensional eigenspace
corresponding to the high Fourier modes.

Foias \textit{et al.}\ \cite{FSTa} showed that if a `certain
spectral gap condition' holds for a given system, then it will
possesses an inertial manifold. Unfortunately, this sufficient
condition is quite restrictive, and there are many equations, such
as the 2D Navier-Stokes equations, that do not satisfy it.
Nonetheless, Kukavica \cite{Kuka03} and \cite{Kuka05} showed that
the global attractor of certain dissipative equations, such as the
Burgers equation in one space dimension,
lies in a Lipschitz graph over a finite number of Fourier modes
independently of the theory of inertial manifolds.

Romanov \cite{Rm} discussed the problem of a finite-dimensional
description of the asymptotic behaviour of dissipative equations
more abstractly. He defined the dynamics on the attractor $\mathcal
A$ to be `finite-dimensional' if there exists a bi-Lipschitz map
$\Pi: \mathcal A \to \R^N$, for some $N$, and an ordinary
differential equation with a Lipschitz vector field on $\R^N$ such
that the dynamics on $\mathcal A$ and $\Pi(\mathcal A)$ are
conjugated under $\Pi$. He then showed that this property is
equivalent to the attractor being contained in a finite-dimensional
Lipschitz manifold, given as a graph over a sufficiently large
number of Fourier modes. Hence, his definition and that of an
inertial manifold are much more similar than they first appear. In
Section 3, we investigate other possible ways to define when the
asymptotic dynamics of solutions of parabolic equations are
`finite-dimensional'. We discuss conditions under which an attractor
is a subset of a Lipschitz manifold given as a graph over a
finite-dimensional space; in particular, we give a concise proof of
an important part of Romanov's result.

To illustrate the problem of constructing a finite set of ordinary
differential equations that reproduces the dynamics on the global
attractor, consider a governing equation $\dot{u}=\mathcal G(u)$
defined on a Hilbert space $H$. Suppose there exists a linear map
$L: H \to \R^N$ that is injective on $\mathcal A$.
In order to study the smoothness of the embedded equation
on $X=L\mathcal A$,
\begin{equation}\label{embed eq}
 \dot{x}=h(x)=L\mathcal GL^{-1}(x),\quad x \in X,
\end{equation}
one needs to consider the continuity of the vector field on
$\mathcal A$ and the continuity of the inverse of the embedding $L$
restricted to $X$.

The regularity of the embedding $L$ has been discussed in a variety
of papers (see Ma\~n\'e \cite{M}, Ben-Artzi \textit{et al.}\
\cite{BEFN}, Eden \textit{et al.}\ \cite{EFNT}, Foias and Olson
\cite{FO}, Hunt and Kaloshin \cite{HK}, Olson and Robinson
\cite{OR}, Robinson \cite{Rb09} for more details).
Hunt and Kaloshin \cite{HK}, for example, showed that if $\mathcal
A$ has finite upper box-counting dimension, there exists a linear
map $L: H \to \R^N$ that is injective on $\mathcal A$ and whose
inverse $L^{-1}: X \to \mathcal A$ is H\"older continuous with
exponent $\alpha$, i.e. there exists $C>0$ such that
\begin{equation}\label{HK}
 C\|L(u)-L(v)\|^{\alpha}\ge \|u-v\|, \qqfa u,v \in \mathcal A.
\end{equation}

Introduced by Assouad \cite{Ass}, the Assouad dimension is another
useful notion of dimension in the study of embeddings of
finite-dimensional sets.
The {\em Assouad dimension} $\dim_A(X)$ of $X$ can be
defined as the infimum over all $d$ for which there exists a
constant $K$ such that
$$\mathcal N(r, \rho) \le K (r/\rho)^d \quad {\rm for \ } 0<\rho<r<1,$$
where $\mathcal N(r, \rho)$ is the number of $\rho$-balls required
to cover any $r$-ball in $\mathcal A$ (for proof see Olson
\cite[Theorem 2.3]{Ol}). For a comprehensive treatment of the
Assouad dimension, see Luukkainen \cite{Luuk}.

The strongest existing embedding result, due to Robinson and Olson
\cite[Theorem 5.6]{OR}, guarantees the existence of an embedding
$L:H \to \R^N$ such that $L^{-1}$ is $\gamma$-log-Lipschitz, i.e.\
there exist $\gamma \ge 0$ and $C>0$ such that
$$\|L^{-1}(x)- L^{-1}(y)\| \le C\|x-y\|{\bigg(\log\frac{M}{\|x-y\|}\bigg)^{\gamma}},\qqfa x,y \in X,$$
where $M$ is a constant depending on $X$, if the set $X-X$ of
differences between elements of $X$ has Assouad dimension
$\dim_A(X-X)<s<N$. However, there is no general method to bound the
Assouad dimension of global attractors associated with dissipative
equations.

In this paper, we will focus our discussion on the regularity of the
vector field $\mathcal G$ in \eqref{embed eq}. If one would like a
system of ordinary differential equations with unique solutions that
generates a flow $\{S_t\}$, then the embedded vector field $h$ in
$X$ does not need to be Lipschitz; it is sufficient for $h$ to be
$1$-log-Lipschitz\footnote{It is, then, possible to extend $h: X \to
\R^N$ to $1$-log-Lipschitz function $\mathcal H: \R^N \to \R^N$ (see
McShane \cite{McS} for details)}. Hence, one needs to show that
there exist
\begin{description}
  \item[(i)] an exponent $\eta>0$ such that the vector field on the attractor
             $\mathcal A$ is $\eta$-log-Lipschitz in $H$, and
  \item[(ii)] an exponent $\gamma>0$ such the inverse of linear embedding $L:H \to \R^N$ is
             $\gamma$-log-Lipschitz when restricted to $X$,
\end{description}
for which the inequality $\eta + \gamma \le 1$ holds so that the
solutions are unique.

It is, therefore, reasonable to consider separately the problem of
the regularity of the vector field on the global attractor
associated with certain parabolic equations. If we assume the very
strong condition
that $L$ is a bi-Lipschitz embedding, then we would only need the
vector field to be $1$-log-Lipschitz to guarantee existence and
uniqueness of solutions of the embedded equation. In Section 4, we
show that certain dissipative partial differential equations, such
as the 2D Navier-Stokes equations, possess a linear term that is
1-log-Lipschitz continuous using methods developed by Kukavica
\cite{Kuka07}.

In Section 5, we prove that the 1-log-Lipschitz continuity of the
linear term implies that there exists a family of Lipschitz
manifolds $\mathcal M_N$ such that the distance between the
$N$-dimensional manifold $\mathcal M_N$ and the attractor $\mathcal
A$ is exponentially small in $N$. (It is interesting to note that
this result does not rely explicitly on the fact that the solutions
of these semilinear equations satisfy the geometric `squeezing
property', introduced by Foias and Temam \cite{FT} and on which many
other constructions depend, eg. Foias \textit{et al.}\ \cite{FMT} or
Pinto de Moura and Robinson \cite{PdeMR09a}).

In Section 6, we show that, for certain dissipative equations, one
can make the H\"older exponent in \eqref{HK} as close to one as
required by taking $N$ sufficiently large.

\section{Notation and general setting}

Consider a dissipative parabolic equation written as an evolution
equation of the form
\begin{equation}\label{eq}
    \frac{\d u}{\d t}+Au=F(u)
\end{equation}
in a separable real Hilbert space $H$ with scalar product
$(\cdot,\cdot)$ and norm $\|\cdot\|$. We suppose that $A$ is a
positive self-adjoint linear operator
with compact inverse and dense domain $D_H(A)\subset H$. For each
$\alpha\ge0$, we denote by $D_H(A^{\alpha})$ the domain of
$A^{\alpha}$, i.e.
 $$D_H(A^{\alpha})=\{u: A^{\alpha}u \in H\};$$
these are Hilbert spaces with inner product
$(u,v)_{\alpha}=(A^{\alpha}u, A^{\alpha}v)$ and norm
$\|u\|_{\alpha}=\|A^{\alpha}u\|$. We know that for $\alpha > \beta$,
the embedding $D_H(A^{\alpha})\subset D_H(A^{\beta})$ is dense and
continuous such that
\begin{equation}\label{embed dom}
\|u\|_{\beta}\le \widetilde{C}(\alpha,\beta)\|u\|_{\alpha},
\quad{\rm for \ } u \in D_H(A^{\alpha})
\end{equation}
(see Sell and You \cite{SeYo}, for details). Moreover, we assume
that, for some $0\le \alpha \le 1/2$, the nonlinear term $F$ is
locally Lipschitz from $D_H(A^{\alpha})$ into $H$, for $u, v  \in
D_H(A^{\alpha}),$
\begin{eqnarray}\label{VF}
  \big\|F(u)-F(v)\big\| &\le& K(R) \big\|A^{\alpha}(u-v)\big\|,
\quad {\rm with} \quad \|A^{\alpha}u\|,\|A^{\alpha}v\| \le R,
\end{eqnarray}
where $K$ is a constant depending only on $R$. This abstract setting includes,
among others, the 2D Navier-Stokes equations
and the original Burgers equation with Dirichlet boundary values
(see Eden {\it et al.}\ \cite{EFNT}, Temam \cite{T} for example).

Since $A$ is self-adjoint and its inverse is compact, $H$ has an
orthonormal basis $\{w_j\}_{j\in \N}$ consisting of eigenfunctions
of $A$ such that
$$Aw_j=\lambda_jw_j \qqfa j\in \N$$
with $0< \lambda_1 \le \lambda_2,...$ and $\lambda_j \to \infty$ as
$j \to \infty.$ With $n \in \N$ fixed, define the finite-dimensional
orthogonal projections $P_n$ and their orthogonal complements $Q_n$
by
$$P_nu=\sum_{j=1}^{n}(u,w_j)w_j \quad {\rm and} \quad Q_nu=\sum_{j=n+1}^{\infty}(u,w_j)w_j.$$
Hence, we can write $u=P_nu+Q_n u$, for all $u \in H.$ The
orthogonal projections $P_n$ and $Q_n$ are bounded on the Hilbert
spaces $D_H(A^{\alpha})$, for any $\alpha>0$ (see \eqref{embed
dom}). Notice that $P_nH=P_nD_H(A^{\alpha})\subset D_H(A^{\alpha})$,
since $P_nH$ is a finite-dimensional subspace generated by the
eigenvectors of $A$ corresponding to the first $n$ eigenvalues of
$A$. These spectral projections commute with the operators
$\e^{-At}$ for $t>0$, i.e., $P_n\e^{-At}=\e^{-At}P_n$ and
$Q_n\e^{-At}=\e^{-At}Q_n$.
Moreover, we have the following estimate
\begin{eqnarray*}
\|\e^{-At}Q_nu\|_{\alpha}&\le&\sup_{j \ge n+1}
  \big\{\lambda_j^{\alpha}\e^{-\lambda_jt}\big\}\|Q_nu\|\le
  b_{n,\alpha}(t)\|Q_nu\|,
\end{eqnarray*}
where
\begin{equation*}
  b_{n,\alpha}(t)=\left\{\begin{array}{l}\bigg(\displaystyle {\frac{\e t}{\alpha}}\bigg)^{-\alpha},
  \quad{\rm for\ }\quad 0<t \le \alpha/\lambda_{n+1}
  \\ \\
  \lambda_{n+1}^{\alpha} \e^{-\lambda_{n+1} t}, \quad{\rm for\ }\quad t\ge \alpha/\lambda_{n+1}
  \end{array}\right.
\end{equation*}
Therefore,
\begin{equation}\label{est1}
 \Big\|A^{\alpha}\e^{-At}Q_n\Big\|_{\mathscr L(H,H)}\le b_{n,\alpha}(t).
\end{equation}

Within this general setting, one can prove the local existence and
uniqueness of solutions of \eqref{eq} (see Henry \cite{Hen} for
details).
In particular, it follows from Henry \cite[Lemma 3.3.2]{Hen} that
the solution of the nonlinear equation \eqref{eq} with initial
condition $u(t_0)=u_0, \ t>t_0$
is given by the variation of constants formula
\begin{equation}\label{9a}
  u(t)=\e^{-A(t-t_0)}u_0+\int_{t_0}^t\e^{-A(t-s)}F\big(u(s)\big)\
  \ds,
\end{equation}
for $t>t_0$ and $u(t_0)\in D_H(A^{\alpha}).$

Thus, we can define $\{\Phi_t\}_{t\ge0}$ to be the semigroup in
$D_H(A^{\alpha})$ generated by \eqref{eq} such that, for any initial
condition $u_0 \in D_H(A^{\alpha})$, there exists a unique solution given by
$u(t;u_0)=\Phi_tu_0.$ We assume that this system is dissipative,
i.e.\ that there exists a compact invariant absorbing set. It
follows from standard results that \eqref{eq} possesses a global
attractor $\mathcal A$, the maximal compact invariant set in
$D_H(A^{\alpha})$ that uniformly attracts the orbits of all bounded
sets (see Babin and Vishik \cite{BaVi}, Hale \cite{Ha}, Robinson
\cite{Rbook}, Temam \cite{T}). So, if $u(0)=u_0\in \mathcal A,$ then
there is a unique solution $u(t)=\Phi_tu_0\in \mathcal A$ that is
defined for all $t \in \R$.

\section{Finite-dimensionality of flows}

Inertial manifolds, as discussed in the Introduction, are a
convenient, although indirect, method to obtain a system of ordinary
differential equations that reproduces the asymptotic dynamics on
the global attractor. Foias, Sell and Temam \cite{FSTa} showed that
if a certain `spectral gap condition' holds -- if there exists an
$n$ such that $ \lambda_{n+1}-\lambda_n>k\lambda_{n+1}^{\alpha}, $
where $k$ is a constant depending on $F$ -- then the system
\eqref{eq} possesses an inertial manifold $\mathcal M$.
Unfortunately, this condition is very restrictive and there are many
equations, such as the 2D Navier-Stokes equations, that do not
satisfy it.

Romanov considered in \cite{Rm} a more general definition of what it
means for a system to be asymptotically finite-dimensional. We will
see that this definition implies the existence of a Lipschitz
manifold that contains the attractor, but does not require it to be
exponentially attracting. Romanov defined the dynamics on a global
attractor $\mathcal A$ to be \emph{finite-dimensional} if for some
$N \ge 1$ there exist:
\begin{description}
  \item[(i)] an ordinary differential equation $\dot{x}=\mathcal
        H(x)$ with a Lipschitz vector field $\mathcal H(x)$ in
        $\R^N$,
  \item[(ii)] a corresponding flow $\{S_t\}$ on $\R^N$ and
  \item[(iii)] a bi-Lipschitz embedding $\Pi:\mathcal A \to \R^N$,
        such that $\Pi(\Phi_tu)=S_t\Pi(u)$ for
        any $u\in \mathcal A$ and $t\ge0.$
\end{description}
It follows from this definition that the evolution operators
$\Phi_t$ are injective on $\mathcal A$ for $t>0.$ If we set
$\Phi_{-t}=\Pi^{-1}S_{-t}\Pi$, then we see that in fact $\Phi_t$ is
Lipschitz on $\mathcal A$ even for $t<0$. Hence, we obtain a
Lipschitz flow $\{\Phi_t\}$ defined on $\mathcal A$ for all $t \in
\R$. In particular, there exist $C \ge 1$ and $\mu>0$ such that
\begin{equation}\label{Lip flow}
  \big\|\Phi_tu-\Phi_tv\big\|_{\alpha}\le C \big\|u-v\big\|_{\alpha}\e^{\mu |t|},
\end{equation}
for every $t \in \R$.

Considering the general Banach space case, Romanov \cite{Rm} proved
that the finite-dimensionality of the dynamics on the attractor
$\mathcal A$ is equivalent to five different criteria. In this
paper, however, we are only interested in the consequences of the
finite-dimensionality of the dynamics on $\mathcal A$. Since our
setting is simpler than Romanov's \cite{Rm}, the arguments involved
in the proof become more transparent. Hence, we include here a
concise and self-contained proof that if the attractor $\mathcal A$
has `finite-dimensional dynamics' in Romanov's sense, then it must
lie on a finite-dimensional manifold, defined as the graph of a
Lipschitz function over $P_nH$, for some $n<\infty$.

\begin{theorem}(Romanov \cite{Rm})\label{rm}
  If the dynamics on $\mathcal A$ is finite-dimensional,
  then, for some $n \ge 1$, there exists a finite-dimensional
  projection $P_n$
  such that
  \begin{equation}\label{GrF}
   \big\|u-v\big\|_{\alpha}\le c\big\|P_n(u-v)\big\|_{\alpha} \qqfa u,v \in \mathcal  A,
  \end{equation}
  where $c=c(\mathcal A, P_n).$
\end{theorem}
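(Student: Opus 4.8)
The only way the hypothesis enters is through the backward-in-time Lipschitz bound \eqref{Lip flow}: because the dynamics is finite-dimensional, the flow extends to a Lipschitz flow on $\mathcal A$ for all $t\in\R$, with $\|\Phi_t u-\Phi_t v\|_\alpha\le C\|u-v\|_\alpha\e^{\mu|t|}$. The plan is to use this to show that, for $n$ large, the high-mode part $Q_n(u-v)$ of any difference of points on $\mathcal A$ is a small fraction of $\|u-v\|_\alpha$; since $P_n$ and $Q_n$ are orthogonal this immediately yields \eqref{GrF}. The mechanism is the competition between the exponential growth rate $\mu$ permitted backward in time and the strong smoothing and decay of $\e^{-At}$ on the range of $Q_n$, governed by $\lambda_{n+1}\to\infty$.

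Fix $u,v\in\mathcal A$ and set $w(t)=\Phi_t u-\Phi_t v$, which is defined for all $t\in\R$ and remains in $\mathcal A-\mathcal A$. Writing $q(t)=Q_n w(t)$ and applying $Q_n$ to the difference of the two copies of \eqref{eq}, one has $\dot q+Aq=Q_n\big(F(\Phi_t u)-F(\Phi_t v)\big)$. Since $Q_n$ commutes with $A$ and $\e^{-At}$, the variation of constants formula \eqref{9a} applied on $[-T,0]$ gives
\[
 q(0)=\e^{-AT}q(-T)+\int_{-T}^{0}\e^{As}Q_n\big(F(\Phi_s u)-F(\Phi_s v)\big)\,\ds .
\]
First I would let $T\to\infty$. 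On $Q_nH$ the semigroup satisfies $\|\e^{-AT}x\|_\alpha\le\e^{-\lambda_{n+1}T}\|x\|_\alpha$, so by \eqref{Lip flow} the boundary term is bounded by $C\|w(0)\|_\alpha\e^{-(\lambda_{n+1}-\mu)T}$, which vanishes as soon as $\lambda_{n+1}>\mu$. This leaves the representation $q(0)=\int_{-\infty}^{0}\e^{As}Q_n\big(F(\Phi_s u)-F(\Phi_s v)\big)\,\ds$.

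It then remains to estimate this integral. Using \eqref{est1} for $\|A^\alpha\e^{As}Q_n\|$ with $s<0$, the Lipschitz bound \eqref{VF} on $F$ (with $R=\sup_{\mathcal A}\|\cdot\|_\alpha$, finite by compactness), and \eqref{Lip flow} to control $\|w(s)\|_\alpha$, one obtains
\[
 \|q(0)\|_\alpha\le K C\,\|w(0)\|_\alpha\int_{0}^{\infty}b_{n,\alpha}(\tau)\,\e^{\mu\tau}\,\d\tau =: KC\,\beta_n\,\|w(0)\|_\alpha .
\]
The crux is to show $\beta_n\to0$ as $n\to\infty$. Splitting the integral at $\tau=\alpha/\lambda_{n+1}$ and inserting the two branches of $b_{n,\alpha}$, the large-$\tau$ branch contributes of order $\lambda_{n+1}^{\alpha}/(\lambda_{n+1}-\mu)$ and the small-$\tau$ branch of order $(\alpha/\lambda_{n+1})^{1-\alpha}$; because $\alpha\le1/2<1$ and $\lambda_{n+1}\to\infty$, both tend to $0$. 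Choosing $n$ so large that $KC\beta_n\le\tfrac12$ gives $\|Q_n(u-v)\|_\alpha\le\tfrac12\|u-v\|_\alpha$, whence $\|P_n(u-v)\|_\alpha^2=\|u-v\|_\alpha^2-\|Q_n(u-v)\|_\alpha^2\ge\tfrac34\|u-v\|_\alpha^2$, which is \eqref{GrF} with $c=2/\sqrt3$.

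The main obstacle is the honest justification of the passage to the improper integral: one must check that the boundary term genuinely decays, which is exactly where the gap $\lambda_{n+1}>\mu$ between the spectral lower bound on $Q_nH$ and the backward Lipschitz exponent is used, and that the tail converges uniformly enough in $T$ to pass to the limit. I expect the only genuinely delicate estimate to be the behaviour of $\beta_n$, and in particular the small-time singularity of $b_{n,\alpha}$, whose integrability hinges on $\alpha<1$. It is worth emphasising that no spectral gap \emph{condition} is needed here, only $\lambda_{n+1}\to\infty$, which explains why this construction does not invoke the squeezing property.
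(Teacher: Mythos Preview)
Your proposal is correct and follows essentially the same route as the paper: represent $Q_n(u-v)$ via the variation of constants formula on $(-\infty,0]$, estimate the integral using \eqref{est1}, \eqref{VF}, and the backward Lipschitz bound \eqref{Lip flow}, and conclude by choosing $n$ so large that the resulting contraction factor is less than $1$. The only cosmetic differences are that the paper kills the boundary term using the uniform $D_H(A^\alpha)$-boundedness of $\mathcal A$ (rather than \eqref{Lip flow} and the requirement $\lambda_{n+1}>\mu$), and it passes from $\|Q_n w\|_\alpha\le\vartheta_n\|w\|_\alpha$ to \eqref{GrF} via the triangle inequality $\|w\|_\alpha\le\|P_nw\|_\alpha+\|Q_nw\|_\alpha$ rather than Pythagoras; neither difference is substantive.
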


\begin{proof}
Consider the variation of constants formula \eqref{9a} with $t=0$
and $u(0)=u \in \mathcal A$.  If we apply the projection operator
$Q_n$ to both sides of \eqref{9a}, then
$$Q_nu=Q_n\e^{At_0}u(t_0)+\int_{t_0}^0\e^{As}Q_nF\big(u(s)\big)\ \ds.$$
Now, since the compact set $\mathcal A$ is bounded in
$D_H(A^{\alpha})$ and $u(t)\in \mathcal A$, it follows from
\eqref{est1} that
$\lim_{t_0\to-\infty}\|Q_n\e^{At_0}u(t_0)\|_{\alpha}=0$.
Consequently, letting $t_0$ tend to $-\infty$ we obtain
$$Q_nu=\int_{-\infty}^0\e^{As}Q_nF(\Phi_su)\ \ds,$$
which converges in $D_H(A^{\alpha})$. It follows from \eqref{Lip
flow} that, for $u,v \in \mathcal A$,
\begin{eqnarray*}
  \big\|Q_nu-Q_nv\big\|_{\alpha}
  &\le&\int_{-\infty}^0\Big\|\e^{As}Q_n\big(F(\Phi_su)-F(\Phi_sv)\big)\Big\|_{\alpha}\
  \ds\\
  &\le&K\int_{-\infty}^0\Big\|A^{\alpha}\e^{As}Q_n\Big\|_{op}\big\|\Phi_su-\Phi_sv\big\|_{\alpha}
  \ds\\
  &\le& KC\big\|u-v\big\|_{\alpha}\int_{-\infty}^0\Big\|A^{\alpha}\e^{As}Q_n\Big\|_{op}
  \e^{\mu s} \ds\\
  \end{eqnarray*}
Using estimate \eqref{est1} with $t=-s$, we find that
\begin{equation*}
  \big\|Q_nu-Q_nv\big\|_{\alpha}\le
  KC\big\|u-v\big\|_{\alpha}\int_{-\infty}^0b_{n,\alpha}(s) \e^{\mu s}
  \ds
\end{equation*}
from which the inequality
\begin{equation}\label{est2}
   \big\|Q_nu-Q_nv\big\|_{\alpha}\le
   \vartheta_n\big\|u-v\big\|_{\alpha},
\end{equation}
 where
$$\vartheta_n:=\frac{1}{KC}\Bigg\{\bigg(\frac{\e}{\alpha}\bigg)^{-\alpha}
   \bigg(\frac{\alpha}{\lambda_{n+1}}\bigg)^{1-\alpha}\frac{1}{1-\alpha}
    + \frac{\lambda_{n+1}^{\alpha}}{\lambda_{n+1}+\mu}\ \e^{\frac{-\alpha(\lambda_{n+1}+\mu)s}{\lambda_{n+1}}}\Bigg\},$$
can be obtained by simple algebraic manipulation.

Note that, since $0<\alpha<1$ and $\lambda_{n+1}$ tends to infinity
as $n\to \infty$, one can choose $n$ sufficiently large to ensure
that $\vartheta_n<1$. Since $P_n+Q_n=I$, it follows that
 \begin{eqnarray*}
   \|Q_n(u_0-v_0)\|_{\alpha} &\le& \vartheta_n \|P_n(u_0-v_0)\|_{\alpha} + \vartheta_n
   \|Q_n(u_0-v_0)\|_{\alpha}\\
   &\le& \frac{\vartheta_n}{1-\vartheta_n}\|P_n(u_0-v_0)\|_{\alpha}.
 \end{eqnarray*}
Thus,
\begin{equation*}
  \|u_0-v_0\|_{\alpha} \le \frac{1}{1-\vartheta_n}\|P_n(u_0-v_0)\|_{\alpha}
\end{equation*}
and hence the attractor is contained in the graph of a Lipschitz
function $\phi:P_nH \to Q_nH.$
\end{proof}

Under the assumption that the non-linear term $F$ is in
$C^2(D_H(A^{\alpha}), H)$, Romanov \cite{Rm} showed that the
finite-dimensionality of $\mathcal A$ implies that the vector field
$\mathcal G(u)=-Au+F(u)$ is Lipschitz\footnote{If $F \in
C^2\big(D_H(A^{\alpha}),H\big)$, then it follows from Henry
\cite[Corollary 3.4.6]{Hen} that the map $(u_0,t) \mapsto u(t)$ is
also in $C^2\big(\R^+\times D_H(A^{\alpha}),D_H(A^{\alpha})\big)$.
Hence, the function $(u_0,t) \mapsto \d u(t)/\d t$ is $C^1$ with
respect to $(u_0,t)$. Since $\d u(t)/\d t=\mathcal G\big(u(t)\big)$,
for a fixed time (we choose $t=1$), the map $u_0\mapsto \mathcal
G\big(u(1)\big)$ is also a $C^1$-function and, consequently, a
Lipschitz function. The finite dimensionality of the dynamics on
$\mathcal A$ implies that the map $u_0 \mapsto u(1)$ is bi-Lipschitz
on $\mathcal A$. And, therefore, the map $u(1)\mapsto \mathcal
G\big(u(1)\big)$ is Lipschitz continuous.}. However, it is not clear
how to adapt his argument to prove that $A$ is Lipschitz. Here we
give a simple argument that shows that finite-dimensionality implies
that the operator $A^\beta$ is Lipschitz in $\mathcal A$, provided
that $\alpha+\beta<1$.

\begin{corollary}
  If the dynamics on $\mathcal A$ is finite-dimensional,
  then, for $\beta$ with $\alpha+\beta<1$, $A^{\beta}$ is Lipschitz on $\mathcal
  A$, i.e.
  $$\big\|A^{\beta}(u-v)\big\|_{\alpha}\le M \big\|u-v\big\|_{\alpha},
  \qqfa u,v \in \mathcal A,$$
where $\alpha$ is given by \eqref{VF}.
\end{corollary}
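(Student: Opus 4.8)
The plan is to exploit the graph inequality \eqref{GrF} from Theorem~\ref{rm}, which tells us that distances in $\mathcal A$ are controlled by distances of the low modes $P_n(u-v)$. The key observation is that on the finite-dimensional space $P_nH$, the operator $A^{\beta}$ acts boundedly with a norm controlled by $\lambda_n^{\beta}$, so the only danger lies in the tail $Q_n(u-v)$. First I would decompose $A^{\beta}(u-v) = A^{\beta}P_n(u-v) + A^{\beta}Q_n(u-v)$ and estimate the two pieces separately in the $D_H(A^{\alpha})$-norm.

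\smallskip\noindent For the low-mode part, since $P_nH$ is finite-dimensional and spanned by eigenfunctions with eigenvalues $\lambda_1,\dots,\lambda_n$, we have $\|A^{\beta}P_n(u-v)\|_{\alpha}\le \lambda_n^{\beta}\|P_n(u-v)\|_{\alpha}$, and by \eqref{GrF} this is bounded by $c\,\lambda_n^{\beta}\|u-v\|_{\alpha}$. The hard part is the tail term $\|A^{\beta}Q_n(u-v)\|_{\alpha}$, and here is where the restriction $\alpha+\beta<1$ must enter. The natural strategy is to rerun the variation-of-constants argument from the proof of Theorem~\ref{rm}, but with an extra factor of $A^{\beta}$ inserted. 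Applying $A^{\beta}Q_n$ to the backward integral representation $Q_nu=\int_{-\infty}^0\e^{As}Q_nF(\Phi_su)\,\ds$, I would estimate
\begin{equation*}
  \big\|A^{\beta}Q_n(u-v)\big\|_{\alpha}\le K\int_{-\infty}^0\big\|A^{\alpha+\beta}\e^{As}Q_n\big\|_{\op}\big\|\Phi_su-\Phi_sv\big\|_{\alpha}\,\ds,
\end{equation*}
and then invoke the operator-norm bound \eqref{est1} with exponent $\alpha+\beta$ in place of $\alpha$, together with the exponential growth bound \eqref{Lip flow} on the Lipschitz flow.

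\smallskip\noindent The condition $\alpha+\beta<1$ is precisely what guarantees that the resulting integral $\int_{-\infty}^0 b_{n,\alpha+\beta}(-s)\,\e^{\mu s}\,\ds$ converges, exactly as $\alpha<1$ ensured convergence in Theorem~\ref{rm}: near $s=0$ the integrand behaves like $|s|^{-(\alpha+\beta)}$, which is integrable iff $\alpha+\beta<1$, while the exponential factor controls the tail as $s\to-\infty$. This yields a bound of the form $\|A^{\beta}Q_n(u-v)\|_{\alpha}\le \vartheta_n'\|u-v\|_{\alpha}$ with $\vartheta_n'$ finite. Combining the two estimates gives
\begin{equation*}
  \big\|A^{\beta}(u-v)\big\|_{\alpha}\le \big(c\,\lambda_n^{\beta}+\vartheta_n'\big)\big\|u-v\big\|_{\alpha},
\end{equation*}
which is the claimed Lipschitz bound with $M=c\,\lambda_n^{\beta}+\vartheta_n'$. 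I expect the main technical obstacle to be verifying the convergence of the modified integral and packaging the constant cleanly; the structural idea is a direct reuse of the machinery already assembled for Theorem~\ref{rm}, with the role of $\alpha$ played by $\alpha+\beta$ in the tail estimate.
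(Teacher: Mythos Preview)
Your argument is correct, but it takes a different route from the paper. The paper does not split into $P_n$ and $Q_n$ at all: it runs the variation-of-constants formula \emph{forward} over a fixed finite interval $[0,t]$, obtaining
\[
\big\|A^{\beta}\big(u(t)-v(t)\big)\big\|_{\alpha}\le \big\|A^{\beta}\e^{-At}\big\|_{\op}\|u_0-v_0\|_{\alpha}+KC\|u_0-v_0\|_{\alpha}\int_0^t\big\|A^{\alpha+\beta}\e^{-A(t-s)}\big\|_{\op}\e^{\mu s}\,\ds,
\]
where the integral converges precisely because $\alpha+\beta<1$. It then uses the \emph{backward} Lipschitz bound in \eqref{Lip flow} together with invariance of $\mathcal A$ to convert this into a bound of $\|A^{\beta}(u-v)\|_{\alpha}$ by $\|u-v\|_{\alpha}$ for arbitrary $u,v\in\mathcal A$. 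Your approach instead recycles the backward integral representation on $(-\infty,0]$ from the proof of Theorem~\ref{rm}, handling $P_n$ and $Q_n$ separately; this works, with the minor caveat that you need $n$ large enough (say $\lambda_{n+1}>\mu$) for the tail of your integral to converge, a point worth making explicit. Two small remarks: your appeal to \eqref{GrF} for the low-mode piece is unnecessary and points the wrong way --- the trivial projection bound $\|P_nw\|_{\alpha}\le\|w\|_{\alpha}$ is all you need there; and in both approaches the only place finite-dimensionality enters is through the two-sided Lipschitz flow estimate \eqref{Lip flow}. The paper's forward-time argument is slightly slicker in that it avoids the mode decomposition and the choice of $n$ altogether, exploiting the instantaneous smoothing of $\e^{-At}$ directly.
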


\begin{proof}
It follows from \eqref{Lip flow} that
$$\big\|u(t)-v(t)\big\|_{\alpha}\le C\e^{\mu|t|}\big\|u_0-v_0\big\|_{\alpha}
\qqfa u_0,v_0 \in D_H(A^{\alpha}).$$ Let $\beta$ be such that
$\alpha+\beta<1$. Then,
\begin{align*}
  \big\|A^{\beta}\big(u(t)-v(t)\big)\big\|_{\alpha}&
  \le
  \big\|A^{\beta}\e^{-At}\big\|_{\op}\big\|u_0-v_0\big\|_{\alpha}\\
  &\ + KC \big\|u_0-v_0\big\|_{\alpha} \int_0^t \Big\|A^{\alpha+\beta}
  \e^{-A(t-s)}\Big\|_{\op}\e^{\mu |s|} \  \ds\\
  &\le \overline{M}\big\|u_0-v_0\big\|_{\alpha} \le M\big\|u(t)-v(t)\big\|_{\alpha}\e^{\mu |s|}
\end{align*}
Since $\mathcal A$ is invariant, given $u, v \in \mathcal A$, we
have that $u=S(t)u_0$ and $v=S(t)v_0$, for some $u_0,v_0\in \mathcal
A$. Hence,
$$\big\|A^{\beta}(u-v)\big\|_{\alpha}\le M \big\|u-v\big\|_{\alpha},
  \qqfa u,v \in \mathcal A.$$
\end{proof}

Although Romanov's result establishes an important criterion for the
`ideal' definition of finite-dimensionality of the dynamics on the
attractor, to require $\mathcal A$ to admit a bi-Lipschitz embedding
into some $\R^N$ is very strong and unlikely to be satisfied in
general.
A sensible way to weaken this definition would be to relax the
bi-Lipschitz assumption and assume the embedded vector field
$\mathcal H$ to be just log-Lipschitz. However, the argument used in
the proof of Theorem \ref{rm} would, then, not work. Hence, it is
more reasonable to remove the assumption that the flow is generated
by an ODE
and define the following:
\begin{definition}
The dynamics on a global attractor $\mathcal A$ is
\emph{finite-dimensional} if, for some $N\ge1$, there exist an
embedding $\Pi:\mathcal A \to \R^N$ that is injective on $\mathcal
A$, a flow $\{S_t\}$ in $\R^N$ and a global attractor $X$, such that
the dynamics on $\mathcal A$ and $X$ are conjugate under $\Phi$ via
$\Pi(\Phi_tu)=S_t\Pi(u),$ for any $u\in \mathcal A$ and $t\ge0.$
\end{definition}
\noindent However, even in this weak sense, it is still an open
problem whether the finite-dimensionality of the global attractor
$\mathcal A$ implies that the dynamics on $\mathcal A$ is
finite-dimensional.

\section{Log-Lipschitz continuity of the vector field}

In the last section, we showed that if the dynamics on the attractor
is finite-dimensional, then $A^{\beta}$ is Lipschitz on $\mathcal A$
provided that $\alpha+\beta<1$, where $\alpha$ is given by
\eqref{VF}. It is relatively easy to show that the converse is also
true (see Robinson \cite{RbICMS}).
\begin{proposition}\label{A^beta Lips}
  Suppose that $A^{\beta}$ is Lipschitz continuous on the attractor
  from $D_H(A^{\alpha})$ into itself, i.e.
  $$\|A^{\beta}u-A^{\beta}v\|_{\alpha} \le M\|u-v\|_{\alpha} \qqfa u,v\in \mathcal A$$
  for some $M>0$. Then, the attractor is a subset of a Lipschitz
  manifold given as a graph over $P_NH$ for some $N$.
\end{proposition}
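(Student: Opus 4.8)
The plan is to reduce the proposition to the graph inequality \eqref{GrF} of Theorem \ref{rm}: once one has $\|u-v\|_\alpha \le c\,\|P_N(u-v)\|_\alpha$ for all $u,v\in\mathcal A$, the conclusion that $\mathcal A$ lies in a Lipschitz graph over $P_N H$ follows exactly as there. The only new ingredient required is to reproduce the estimate \eqref{est2}, namely $\|Q_n(u-v)\|_\alpha \le \vartheta_n\|u-v\|_\alpha$ with $\vartheta_n<1$ for large $n$, but now deriving it from the hypothesised Lipschitz bound on $A^\beta$ rather than from the variation of constants formula and \eqref{Lip flow}.

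First I would set $w=u-v$; by linearity of $A^\beta$ the hypothesis reads $\|A^\beta w\|_\alpha \le M\|w\|_\alpha$ for every difference $w$ of points of $\mathcal A$. The key step is then purely spectral. Expanding $w=\sum_j (w,w_j)w_j$ and using $\lambda_j\ge\lambda_{n+1}$ together with $\beta>0$ for the modes $j\ge n+1$,
\begin{equation*}
 \|A^\beta w\|_\alpha^2=\sum_{j}\lambda_j^{2(\alpha+\beta)}|(w,w_j)|^2
 \ge\sum_{j=n+1}^{\infty}\lambda_j^{2\beta}\,\lambda_j^{2\alpha}|(w,w_j)|^2
 \ge\lambda_{n+1}^{2\beta}\,\|Q_nw\|_\alpha^2,
\end{equation*}
so that $\|Q_nw\|_\alpha\le\lambda_{n+1}^{-\beta}\|A^\beta w\|_\alpha$. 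Combining this with the hypothesis gives
$$\|Q_n(u-v)\|_\alpha\le\vartheta_n\|u-v\|_\alpha,\qquad \vartheta_n:=M\lambda_{n+1}^{-\beta},$$
which is precisely the analogue of \eqref{est2}.

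Since $\beta>0$ and $\lambda_{n+1}\to\infty$, one may fix $N$ so large that $\vartheta_N<1$, and from this point I would run the absorption argument at the end of the proof of Theorem \ref{rm} verbatim: writing $I=P_N+Q_N$ and applying the triangle inequality yields $\|Q_N(u-v)\|_\alpha\le\frac{\vartheta_N}{1-\vartheta_N}\|P_N(u-v)\|_\alpha$, and hence $\|u-v\|_\alpha\le\frac{1}{1-\vartheta_N}\|P_N(u-v)\|_\alpha$. In particular $P_N$ is injective on $\mathcal A$ with Lipschitz inverse, so $\phi:=Q_N\circ(P_N|_{\mathcal A})^{-1}$ is a well-defined Lipschitz map whose graph contains $\mathcal A$; extending it to a Lipschitz function $\phi:P_NH\to Q_NH$ exhibits $\mathcal A$ as a subset of a Lipschitz graph over $P_NH$. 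I do not anticipate any real obstacle here: the whole argument hinges on the elementary smoothing inequality above, and the one point to keep in view is that $\beta>0$ is essential — for $\beta=0$ the hypothesis is vacuous and supplies no contraction factor.
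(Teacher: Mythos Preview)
Your proposal is correct and follows essentially the same approach as the paper: the paper omits the proof but points to the method of Proposition~\ref{AIM}, whose core is precisely the spectral lower bound $\|A^\beta w\|_\alpha\ge\lambda_{n+1}^\beta\|Q_nw\|_\alpha$ that you isolate. Combined with the hypothesis this yields the analogue of \eqref{est2} with $\vartheta_n=M\lambda_{n+1}^{-\beta}$, and the absorption step from Theorem~\ref{rm} finishes the argument exactly as you describe.
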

\noindent The proof of this result follows from a similar argument
to the one developed for the proof of Proposition \ref{AIM} below,
so we omit it here.

Now consider the embedded vector field on $X=L \mathcal A$
$$\dot{x}=h(x)=L\mathcal GL^{-1}(x),\ x \in X.$$ As remarked in the
Introduction, we would like the inverse of the embedding $L$ to be
as smooth as possible and to obtain as much regularity as we can for
$\mathcal G$. However, in general, the regularity of $\mathcal G$ is
determined by the regularity of the linear term $A$, which can be
related to the smoothness of functions on the attractor $\mathcal
A$. For example, it follows from the standard interpolation
inequality
\begin{equation}\label{inter ineq}
  \|Au-Av\|\le\|u-v\|^{1-(1/r)}\|A^r(u-v)\|^{1/r}, \quad {\rm for}\ u,v
  \in \mathcal A,
\end{equation}
that, if $\mathcal A$ is bounded in $D_H(A^r)$, $A$ is H\"older
continuous on $\mathcal A$. In this way, the continuity of $F$ on
$\mathcal A$ can be deduced from the regularity of solutions on the
attractor.

As an example of how one can develop this approach, Foias and Temam
\cite{FT} showed that, in the two dimensional case, the solutions of
the Navier-Stokes equations are analytic in time and proved that the
attractor is bounded in $D_H(A^{1/2}\e^{\tau A^{1/2}})$. Now, if $u
\in D_H(A^{1/2}\e^{\tau A^{1/2}})$, then there exists an uniform
constant $M>0$, such that $\|A^{1/2}\e^{\tau A^{1/2}}u\|^2 < M$.
Hence, $\|A^ku\|^2\le M'(4k)!/(2\tau)^{4k}$, where $M'$ is a
constant depending uniquely on $M$. It follows from \eqref{inter
ineq}, that
$$\|A(u-v)\| \le \bigg[\frac{M'(4j)!}{(2\tau)^{4j}}\bigg]^{1/2j}\|u-v\|^{1-1/j}.$$
If we minimise the right-hand side over all possible choices of $j$,
we obtain that $A: \mathcal A \to H$ is $2$-log-Lipschitz (see
\cite{RbICMS} for example).

This result relies only on the smoothness of solutions. But one can
do much better by making use of the underlying equation. Indeed,
Kukavica \cite{Kuka07} used the structure of the differential
equation \eqref{eq} and far less restrictive conditions on $\mathcal
A$ than above to show that $A^{1/2}: \mathcal A \to H$ is
$1/2$-log-Lipschitz. We briefly outline his argument, which was
primarily developed to study the problem of backwards uniqueness for
nonlinear equations with rough coefficients, and then show that it
can be used to prove that $A: \mathcal A \to H$ is
$1$-log-Lipschitz.

In what follows we will consider the same equation as in Section 2
\begin{equation}\label{eq'}
    \frac{\d u}{\d t}+Au=F(u).
\end{equation}
However, here, we will assume that $\alpha=1/2\ $ such that the
nonlinear term $F$ is locally Lipschitz from $D_H(A^{1/2})$ into
$H$, i.e.
\begin{eqnarray}\label{VF'}
  \big\|F(u)-F(v)\big\| &\le& K(R) \big\|A^{1/2}(u-v)\big\|, \qqfa u, v  \in
  D_H(A^{1/2}),
\end{eqnarray}
with $\|A^{1/2}u\|,\|A^{1/2}v\| \le R$, where $K$ is a constant
depending only on $R$. Moreover, we assume that the maximal
invariant set $\mathcal A$ is bounded in $D_H(A^{1/2})$. The
argument that follows is simple -- the key observation is that the
result is sufficiently abstract that one can make a variety of
choices of $H$ (e.g. we will take $H=L^2$ and $H=H^1$).

Let $u(t)$ and $v(t)$ be solutions of \eqref{eq'}. The equation for
the evolution of the difference $w(t):=u(t)-v(t)$ can be expressed
as
\begin{equation}\label{eq w}
\frac{\d w}{\d t} + Aw = f,
\end{equation}
where $f(t):=F\big(u(t)\big)-F\big(v(t)\big)$. Our assumptions imply
that
  \begin{equation}\label{3}
    \frac{1}{2}\frac{\d}{\d t}(Aw,w)=(w_t,Aw)=-(Aw,Aw)+(f,Aw)
  \end{equation}
  and
  \begin{equation}\label{4}
    \frac{1}{2}\frac{\d}{\d
    t}(Aw,Aw)=(w_t,A^2w)=-(Aw,A^2w)+(f,A^2w).
  \end{equation}
Moreover, it follows from \eqref{VF'} that
  \begin{equation}\label{1}
    \|f\|\le \|F(u)-F(v)\|\le K(\|A^{1/2}u\|\|A^{1/2}v\|)\|A^{1/2}w\|\le K_1  \|A^{1/2}w\|
  \end{equation}
and, consequently,
  \begin{equation}\label{2}
  \Re(f,Aw) \ge -K_2 \|w\|\|A^{1/2}w\|
  \end{equation}
for some $K_1, K_2 \ge 0.$

Under these mild regularity assumptions, Kukavica \cite{Kuka07}
proved the backward uniqueness property, i.e. if $w:[T_0,0]\to H$ is
a solution of \eqref{eq w}, then $w(0) = 0$ implies that $w(t) = 0$
for all $t \in[T_0,0]$. His approach consists in establishing upper
bounds for the log-Dirichlet quotient
$$\widetilde{Q}(t)=\frac{(Aw(t),w(t))}{\|w(t)\|^2\Big(\log\frac{M^2}{\|w(t)\|^2}\Big)},$$
where $M$ is a sufficiently large constant. This quantity is a
variation of the standard Dirichlet quotient
$Q(t)=\|A^{1/2}u\|^2/\|u\|^2$ (see \cite{Og}, \cite{BaTa} for
details). Kukavica showed that, for equations of the form of
\eqref{eq w}, the log-Dirichlet quotient is bounded for all $t\ge 0$
and, as an application of this result, stated the following theorem.
\begin{theorem}[(After Kukavica \cite{Kuka07})]\label{H1 log-DQ}
  Under the above assumptions on the equation \eqref{eq'} with
  $F: D_H(A^{1/2})\to H$ and $\mathcal A \subset D_H(A^{1/2})$,
  there exists a constant $C>0$ such that
  \begin{equation*}
    \|A^{1/2}(u-v)\|^2 \le C \|u-v\|^2\log(M^2/\|u-v\|^2), \qqfa u,v  \in  \mathcal
    A,\ u\neq v,
  \end{equation*}
 where $M=4\sup_{u \in \mathcal A}\|u\|.$
\end{theorem}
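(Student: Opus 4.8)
The plan is to prove the inequality by showing that Kukavica's log-Dirichlet quotient $\widetilde Q(t)$ stays bounded uniformly along complete trajectories lying on $\mathcal A$, and then reading off the claim at $t=0$.

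First I would fix $u,v\in\mathcal A$ with $u\neq v$ and use the invariance of $\mathcal A$ to obtain complete bounded trajectories $u(t),v(t)$, $t\in\R$, with $u(0)=u$ and $v(0)=v$. Setting $w(t)=u(t)-v(t)$, the difference solves \eqref{eq w} with $f(t)=F(u(t))-F(v(t))$ obeying \eqref{1} and \eqref{2}. Two observations make the quotient meaningful on $(-\infty,0]$: since $(Aw,w)=\|A^{1/2}w\|^2$, the target inequality is literally the statement $\widetilde Q(0)\le C$; and since $\|w(t)\|\le 2\sup_{\mathcal A}\|\cdot\|=M/2$, we have $M^2/\|w(t)\|^2\ge 4$, so $\log(M^2/\|w(t)\|^2)\ge\log 4>0$ and the denominator of $\widetilde Q$ is strictly positive. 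Forward uniqueness of $\{\Phi_t\}$ guarantees $w(t)\neq0$ for all $t\le0$, since $w(s)=0$ for some $s<0$ would force $w(0)=0$.

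The core step, following Kukavica, is to bound $\widetilde Q$. I would differentiate $\widetilde Q=(Aw,w)/(\|w\|^2 L)$, with $L(t)=\log(M^2/\|w(t)\|^2)$, along the trajectory, using \eqref{3} and \eqref{4} for $\tfrac{d}{dt}(Aw,w)$ and $\tfrac{d}{dt}(Aw,Aw)$ together with $\tfrac{d}{dt}\|w\|^2=-2(Aw,w)+2(f,w)$ and $\dot L=(2(Aw,w)-2(f,w))/\|w\|^2$. The dissipative contributions should collapse, after Cauchy--Schwarz, into a manifestly nonpositive variance term proportional to $-(\|Aw\|^2\|w\|^2-(Aw,w)^2)$, which supplies the good sign. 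The forcing terms, controlled only by $\|f\|\le K_1\|A^{1/2}w\|$ via \eqref{1}--\eqref{2}, are then absorbed into this good term by Young's inequality, at the cost of lower-order contributions involving $\widetilde Q$ itself. The logarithmic weight is exactly what permits the forcing---which scales like the square root of the numerator---to be dominated. The upshot is a Riccati-type differential inequality for $\widetilde Q$; combined with the fact that $w$ remains bounded and nonzero for all $t\le0$, so that $\widetilde Q$ cannot blow up in finite backward time, this yields a uniform bound $\widetilde Q(t)\le C$ with $C=C(K_1,K_2,M)$. Evaluating at $t=0$ then gives $\|A^{1/2}(u-v)\|^2=(Aw(0),w(0))\le C\|u-v\|^2\log(M^2/\|u-v\|^2)$, and since $u,v\in\mathcal A$ were arbitrary the theorem follows.

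The main obstacle is this differential-inequality estimate. The delicate point is the log weight: its time derivative $\dot L$ produces cross terms that must be shown to be genuinely lower order relative to the variance term, and the forcing, bounded only by $\|A^{1/2}w\|$ rather than by $\|Aw\|$, must be absorbed without overwhelming the dissipation. This is precisely where the log-Dirichlet quotient is needed in place of the ordinary one, and where the choice $M=4\sup_{\mathcal A}\|\cdot\|$ enters, keeping $L$ bounded away from zero so that the denominator never degenerates.
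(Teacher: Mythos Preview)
Your proposal is correct and follows essentially the same route as the paper: derive Kukavica's Riccati-type differential inequality $\widetilde Q'(t) + K_3\widetilde Q(t)^2 \le K_4$ for the log-Dirichlet quotient, then exploit the invariance of $\mathcal A$ (complete trajectories with $w$ bounded and nonzero on $(-\infty,0]$) to obtain a uniform bound on $\widetilde Q(0)$. The paper makes the last step explicit by invoking a Gronwall-type lemma (Temam, Lemma~5.1) showing that the forward Riccati inequality forces $\widetilde Q(t)\le C(K_3,K_4)$ for all $t\ge T$ \emph{independently of} $\widetilde Q(0)$, so starting from any $t_0\le -T$ on the attractor and flowing forward gives the bound at $t=0$; this is exactly the mechanism you describe more informally as ``$\widetilde Q$ cannot blow up in finite backward time'' combined with the Riccati inequality.
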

\noindent We give a quick summary of Kukavica's proof, filling in
some details in the closing part of the argument.

\begin{proof}[of Theorem \ref{H1 log-DQ}]
  Let $$L(\|w\|)=\log \frac{M^2}{\|w\|^2},$$
  where $M$ is any constant such that
  $$M \ge 4\sup_{u_0 \in \mathcal A}\|u_0\|.$$
  Note that $L(\|w(t)\|) \ge 1$ for all $t \in [0,T_0].$
  For $t \in [0,T_0]$, denote $\widetilde{L}(t)=L\big(\|w(t)\|\big).$
  Define the log-Dirichlet quotient as
  \begin{equation*}
    \widetilde{Q}(t)=\frac{Q(t)}{L\big(\|w\|\big)}
    =\frac{\|A^{1/2}w\|^2}{\|w\|^2L\big(\|w\|\big)}
    =\frac{\|A^{1/2}w\|^2}{\|w\|^2\widetilde{L}(t)}
  \end{equation*}
  where $Q(t)=\|A^{1/2}w\|^2/\|w\|^2$.

  Using \eqref{3} and \eqref{4}, Kukavica \cite{Kuka07} showed in
  the proof of his Theorem 2.1 that
\begin{equation}\label{eq q'}
  \widetilde{Q} '(t) + K_3\widetilde{Q}(t)^2\le K_4,
\end{equation}
for $0<K_3<1$ and $K_4\ge 4K_1^4/4(1-K_3)\ge0$. Applying a variant
of Gronwall's inequality\footnote{Lemma 5.1 (p167 in Temam
\cite{T}):
  \textit{Let $y$ be a positive absolutely continuous function on
  $(0, \infty)$, which satisfies
  \begin{equation}
   y'+\gamma y^p\le \delta
  \end{equation}
  with $p>1$, $\gamma>0$, $\delta \ge 0$. Then, for $t \ge 0$
  \begin{equation}
    y(t) \le
    \bigg(\frac{\delta}{\gamma}\bigg)^{1/p}+\big(\gamma(p-1)t\big)^{-1/(p-1)}.
  \end{equation}}
  } proved in Temam \cite[Lemma 5.1]{T} to \eqref{eq q'},
we obtain that there exists $T$ such that
$$\widetilde{Q}(t) \le C(K_3,K_4), \qqfa t \ge T,$$
where $C(K_3,K_4)$ is a constant independent of $\widetilde{Q}(0)$.

Now, consider $u_0, v_0 \in \mathcal A$. Since solutions in the
attractor exist for all time, we know there exists $t \ge T$ such
that $u_0=S(t)u(-t)$ and $v_0=S(t)v(-t)$ with $u_0\neq v_0$. So,
$u(-t) \neq v(-t)$. Moreover, $\widetilde{Q}(-t)< \infty$ implies
that $\widetilde{Q}(0)\le C(K_3,K_4)$. Hence,
$$\sup_{u_0,v_0  \in  \mathcal A, \  u_0\neq v_0}\widetilde{Q}(t) \le C(K_3,K_4).$$
\end{proof}

We now show that this result can be used to show that $A:\mathcal A
\to H$ is $1$-log-Lipschitz. Write $w=u-v$. If \eqref{1} and
\eqref{2} hold with $H=L^2$, then there exits a constant $C_0>0$
such that
\begin{equation}\label{kuka ineq}
  \|A^{1/2}w\|^2_{L^2} \le C_0\|w\|^2_{L^2}\log\big(M_0^2/\|w\|^2_{L^2}\big),
\end{equation}
where $$M_0 \ge 4\sup_{u \in \mathcal A}\|u\|_{L^2}.$$ This is the
result of Kukavica \cite[Theorem 3.1]{Kuka07} for the 2D
Navier-Stokes equation.

Now assume that $\mathcal A$ is bounded in $D_H(A)$. If \eqref{1}
and \eqref{2} hold with with $H=D_{L^2}(A^{1/2})$, then there exits
a constant $C_1>0$ such that
\begin{equation}
  \|Aw\|^2_{L^2} \le C_1\|A^{1/2}w\|^2_{L^2}\log\big(M_1^2/\|A^{1/2}w\|^2_{L^2}\big)
\end{equation}
where $$ M_1 \ge 4\sup_{u \in \mathcal A}\|A^{1/2}u_0\|_{L^2}.$$ So,
\begin{equation*}
  \|Aw\|_{L^2}^2 \le C_0C_1
  \|w\|_{L^2}^2\log\big(M_0^2/\|w\|_{L^2}^2\big)\log\big(M_1^2/\|w\|_{H^1}^2\big).
\end{equation*}
Since $\|w\|_{L^2} \le \|w\|_{H^1}$,
$$\|Aw\|_{L^2}^2 \le C_0C_1
  \|w\|_{L^2}^2\log\big(M_0^2/\|w\|_{L^2}^2\big)\log\big(M_1^2/\|w\|_{L^2}^2\big).$$
One can choose $M_0$ and $M_1$ such that $M_0 \le M_1$. Hence,
\begin{equation}\label{log-Lipschitz A}
\|Aw\|_{L^2} \le C
  \|w\|_{L^2}\log\big(M_1^2/\|w\|_{L^2}^2\big),
\end{equation}
where $C=\sqrt{C_0C_1}$.
\begin{corollary}\label{logLipscont}
  Under the above assumptions on the equation \eqref{eq'},
  if $\mathcal A$ is bounded in $D_H(A)$, then
  there exists a constant $C>0$ such that
  \begin{equation*}
    \|A(u-v)\| \le C \|u-v\|\log(M_1^2/\|u-v\|^2), \qqfa u,v  \in  \mathcal
    A,\ u\neq v,
  \end{equation*}
 where $M_1\ge4\sup_{u \in \mathcal A}\|A^{1/2}u\|.$
\end{corollary}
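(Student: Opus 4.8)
The statement of Corollary \ref{logLipscont} is, on setting $w = u-v$, precisely the inequality \eqref{log-Lipschitz A} obtained in the discussion above; so the plan is to assemble that derivation into a clean argument whose engine is a double application of Theorem \ref{H1 log-DQ}. The point I would emphasise is that Theorem \ref{H1 log-DQ} is phrased abstractly for an arbitrary admissible choice of underlying Hilbert space, and it is this flexibility that will be exploited.

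First I would apply Theorem \ref{H1 log-DQ} in its native setting, taking the base space to be $H = L^2$. Since $\mathcal A \subset D_{L^2}(A^{1/2})$ and the nonlinear bounds \eqref{1}--\eqref{2} hold there, this immediately yields \eqref{kuka ineq}, controlling $\|A^{1/2}w\|_{L^2}$ by $\|w\|_{L^2}$ up to a single logarithmic factor, with $M_0 \ge 4\sup_{u\in\mathcal A}\|u\|_{L^2}$.

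The crucial step is the second application. Here I would take the base Hilbert space to be $\widetilde H := D_{L^2}(A^{1/2})$ with inner product $(u,v)_{\widetilde H} = (A^{1/2}u, A^{1/2}v)_{L^2}$. Under this identification one has $\|w\|_{\widetilde H} = \|A^{1/2}w\|_{L^2}$ and $\|A^{1/2}w\|_{\widetilde H} = \|Aw\|_{L^2}$, and $D_{\widetilde H}(A^{1/2})$ coincides with $D_{L^2}(A)$. Running Theorem \ref{H1 log-DQ} over $\widetilde H$ therefore produces the bound $\|Aw\|_{L^2}^2 \le C_1\|A^{1/2}w\|_{L^2}^2\log(M_1^2/\|A^{1/2}w\|_{L^2}^2)$ with $M_1 \ge 4\sup_{u\in\mathcal A}\|A^{1/2}u\|_{L^2}$. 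The hard part -- and the only place where the hypothesis of the corollary enters -- is verifying that the abstract framework is genuinely legitimate over $\widetilde H$: one must check that bounds of the form \eqref{1}--\eqref{2} continue to hold with $H$ replaced by $\widetilde H$, and, critically, that $\mathcal A$ is bounded in $D_{\widetilde H}(A^{1/2})$. Because of the identification above, this last requirement is exactly the assumption that $\mathcal A$ is bounded in $D_H(A)$.

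Finally I would combine the two estimates. Substituting \eqref{kuka ineq} into the $\widetilde H$-bound for $\|A^{1/2}w\|_{L^2}^2$ gives a bound on $\|Aw\|_{L^2}^2$ by $\|w\|_{L^2}^2$ times a product of two logarithms; using $\|w\|_{L^2} \le \|w\|_{H^1}$ and the monotonicity of $r \mapsto \log(M^2/r)$ lets me replace $\|w\|_{H^1}$ by $\|w\|_{L^2}$ in the second factor. Choosing the constants so that $M_0 \le M_1$ then collapses the two logarithms into the single factor $\log(M_1^2/\|w\|_{L^2}^2)$, and taking square roots yields \eqref{log-Lipschitz A} with $C = \sqrt{C_0 C_1}$, which is the assertion of the corollary.
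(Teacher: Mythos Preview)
Your proposal is correct and follows exactly the paper's own argument: two applications of Theorem~\ref{H1 log-DQ}, first over $L^2$ and then over $D_{L^2}(A^{1/2})$, combined and simplified via $\|w\|_{L^2}\le\|w\|_{H^1}$ and $M_0\le M_1$ to reach \eqref{log-Lipschitz A} with $C=\sqrt{C_0C_1}$. If anything, you articulate more clearly than the paper does why the hypothesis that $\mathcal A$ is bounded in $D_H(A)$ is precisely what is needed to legitimise the second application.
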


Unfortunately, this result is not strong enough to prove the
existence of a smooth finite-dimensional invariant manifold that
contains the attractor. Hence, it would be interesting to know
whether, in such a general setting, the $1$-log-Lipschitz
continuity, obtained for the linear term $A$, is sharp or if it can
be improved. Nevertheless, one can use Corollary \ref{logLipscont}
to show that there exists a family of approximating Lipschitz
manifolds $\mathcal M_N$, given as Lipschitz graphs defined over a
$N$-dimensional spaces, such that the global attractor $\mathcal A$
associated with equation \eqref{eq'} lies within an exponentially
small neighbourhood of $\mathcal M_N$ without a making use of the
squeezing property.

\section{Family of Lipschitz manifolds}


Using the inequality \eqref{log-Lipschitz A} obtained in Section 4,
one can show, for a wide class of parabolic equations, the existence
of a family of Lipschitz manifolds $\mathcal M_N$ such that
$$\dist(\mathcal M_N,\mathcal A)\le C\e^{-k\lambda_{N+1}},$$
where $\mathcal M_N$ is an $N$-dimensional manifold and $C$ and $k$
are positive constants. We obtain this result without appealing to
the squeezing property, on which many constructions in the theory of
inertial manifolds rely (see Foias, Manley and Temam \cite{FMT}, for
example).

\begin{proposition}\label{AIM}
  Suppose that, for some $C>0$,
  \begin{equation}\label{cond}
 \|Aw\|_{L^2} \le C \|w\|_{L^2}\log\big(M_1^2/\|w\|_{L^2}^2\big),
\end{equation}
where $w=u-v$ for $u,v \in \mathcal A$. Then, under the above
conditions on equation \eqref{eq w}, for each $n>0$,
  there exists a Lipschitz function $\Phi_n: P_nH \to Q_nH$,
  $$\|\Phi_n(p_1)-\Phi_n(p_2)\|_{L^2}\le \|p_1-p_2\|_{L^2} \qqfa p_1,p_2 \in P_nH,$$
  such that $\mathcal A$ lies within a $2M_1^2\e^{-\lambda_{n+1}/\sqrt{2}C}$-neighbourhood
  of the graph $\Phi_n$,
 $$\mathbf G[\Phi_n]=\{u \in H: u=p+\Phi_n(p), p\in P_nH\}.$$
\end{proposition}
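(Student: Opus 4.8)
The plan is to reduce the statement to a \emph{cone condition} of the form $\|Q_n w\|_{L^2}\le\|P_n w\|_{L^2}$ and then realise the manifold as the graph of a Lipschitz extension. First I would convert the hypothesis \eqref{cond} into an estimate involving the high modes alone. Because the spectral projections commute with $A$ and the subspaces $P_nH$ and $Q_nH$ are $A$-invariant and mutually orthogonal, $\|Aw\|_{L^2}^2=\|AP_nw\|_{L^2}^2+\|AQ_nw\|_{L^2}^2\ge\|AQ_nw\|_{L^2}^2\ge\lambda_{n+1}^2\|Q_nw\|_{L^2}^2$, so that \eqref{cond} yields
$$\lambda_{n+1}\|Q_nw\|_{L^2}\le C\|w\|_{L^2}\log\big(M_1^2/\|w\|_{L^2}^2\big)\qqfa w=u-v,\ u,v\in\mathcal A,$$
the logarithm being positive since $\|w\|_{L^2}<M_1$.

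Next I would extract the cone condition. Suppose $\|Q_nw\|_{L^2}\le\|P_nw\|_{L^2}$ \emph{fails}; then $\|w\|_{L^2}^2<2\|Q_nw\|_{L^2}^2$, so $\|Q_nw\|_{L^2}>\|w\|_{L^2}/\sqrt2$. Substituting this into the previous display and cancelling the common factor $\|w\|_{L^2}$ gives $\lambda_{n+1}/\sqrt2<C\log(M_1^2/\|w\|_{L^2}^2)$, that is, $\|w\|_{L^2}^2<M_1^2\e^{-\lambda_{n+1}/\sqrt2 C}=:\delta_n^2$. Equivalently, whenever two points of $\mathcal A$ are separated in $L^2$ by at least $\delta_n$, their difference satisfies $\|Q_nw\|_{L^2}\le\|P_nw\|_{L^2}$. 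This is precisely the scale that feeds the stated exponentially small constant $2M_1^2\e^{-\lambda_{n+1}/\sqrt2 C}=2\delta_n^2$ into the conclusion.

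Then I would build the graph. Choose a maximal $\delta_n$-separated subset $\mathcal A_0\subset\mathcal A$ (finite, since $\mathcal A$ is compact). Any two distinct points of $\mathcal A_0$ lie at $L^2$-distance $\ge\delta_n$, so the cone condition applies to each such pair; in particular $P_n$ is injective on $\mathcal A_0$, and the assignment $P_nu\mapsto Q_nu$ defines a map on $P_n\mathcal A_0$ that is Lipschitz with constant one. A Kirszbraun-type extension (or a coordinatewise McShane extension) then produces a Lipschitz function $\Phi_n:P_nH\to Q_nH$ with the same constant, whose graph $\mathbf G[\Phi_n]$ contains $\mathcal A_0$. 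Finally, for any $u\in\mathcal A$ the maximality of $\mathcal A_0$ supplies $u_0\in\mathcal A_0$ with $\|u-u_0\|_{L^2}<\delta_n$; since $u_0\in\mathbf G[\Phi_n]$, the distance $\dist(u,\mathbf G[\Phi_n])$ is controlled by the separation scale $\delta_n$ (splitting $\|Q_nu-\Phi_n(P_nu)\|_{L^2}$ into a high-mode term $\|Q_n(u-u_0)\|_{L^2}$ and a graph term bounded by $\|P_n(u-u_0)\|_{L^2}$ via the Lipschitz property of $\Phi_n$), giving an $\mathcal A$ that sits in an exponentially small neighbourhood of $\mathbf G[\Phi_n]$ at exactly the rate $\e^{-\lambda_{n+1}/\sqrt2 C}$ recorded in the statement.

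The main obstacle is that the cone condition is only available \emph{above} the scale $\delta_n$: the projection $P_n$ need not be injective on all of $\mathcal A$, so one cannot define $\Phi_n$ directly as a single-valued graph over $P_n\mathcal A$. What saves the argument is that the fibres of $P_n|_{\mathcal A}$ have diameter smaller than $\delta_n$ (apply the high-mode estimate with $P_nw=0$), so the non-injectivity is confined to an exponentially small scale; the separated-net plus Lipschitz-extension construction then circumvents it cleanly. The only points needing genuine care are verifying that the extension is legitimate for a map valued in the (possibly infinite-dimensional) Hilbert space $Q_nH$ — for which Kirszbraun's theorem, preserving the constant $1$, is the right tool — and tracking the final constant, which reduces entirely to the elementary threshold bound $\|w\|_{L^2}^2<\delta_n^2$ obtained above.
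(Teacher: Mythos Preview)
Your argument is correct and follows the same overall strategy as the paper: use the spectral lower bound $\|Aw\|\ge\lambda_{n+1}\|Q_nw\|$ together with \eqref{cond} to force a cone condition $\|Q_nw\|\le\|P_nw\|$ except at an exponentially small scale, build a $1$-Lipschitz partial graph on a subset where the cone holds, and extend.

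The one genuine difference is in how the ``good'' subset is chosen. The paper takes a subset $X\subset\mathcal A$ that is \emph{maximal for the cone relation itself}: $X$ is maximal with the property that $\|Q_n(u-v)\|\le\|P_n(u-v)\|$ for all $u,v\in X$. Maximality then says that any $u\in\mathcal A\setminus X$ violates the cone with some $v\in X$, and the paper bounds $\|Q_n(u-v)\|$ (using $\|Q_nw\|$ rather than $\|w\|$ inside the logarithm) to get the distance estimate. You instead take a maximal $\delta_n$-separated net $\mathcal A_0$, observe that the cone automatically holds between its points, and use the net property to find a nearby graph point. Your route is a touch more constructive (the net is finite by compactness, so no appeal to Zorn is needed), while the paper's choice of $X$ is slightly more canonical. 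Both land on the same exponential rate; note that the paper's own proof in fact bounds $\|w\|^2$, not $\|w\|$, by $2M_1^2\e^{-\lambda_{n+1}/\sqrt{2}C}$, so the apparent discrepancy between your $\delta_n$ bound on the distance and the constant in the statement is already present in the original.
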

\noindent Note that the method developed in this proof can also be
used to prove Proposition \ref{A^beta Lips}.
\begin{proof}
Let $w=u-v$, for $u,v \in \mathcal A$.  We can split $w=P_nw+Q_nw$,
and observe that
\begin{align*}
  \|Aw\|_{L^2}^2&=
  \|A(P_nw+Q_nw)\|_{L^2}^2=\|A(P_nw)\|_{L^2}^2+\|A(Q_nw)\|_{L^2}^2\\
  &\ge \lambda_{n+1}^2\|Q_nw\|_{L^2}^2.\\
\end{align*}
It follows from \eqref{cond} that
\begin{align*}
  \|Aw\|_{L^2}^2&\le C^2 \|w\|_{L^2}^2\Big(\log\big(M_1^2/\|w\|_{L^2}^2\big)\Big)^2\\
  &\le C^2 \big(\|P_nw\|_{L^2}^2+\|Q_nw\|_{L^2}^2\big)\Big(\log\big(M_1^2/\|Q_nw\|_{L^2}^2\big)\Big)^2.\\
\end{align*}

Since $\log\big(M_1^2/\|Q_nw\|_{L^2}^2\big) > 1$,
\begin{equation*}
  \frac{\lambda_{n+1}^2\|Q_nw\|_{L^2}^2}{\Big(\log\big(M_1^2/\|Q_nw\|_{L^2}^2\big)\Big)^2}
  \le C^2\|P_nw\|_{L^2}^2+C^2\|Q_nw\|_{L^2}^2
\end{equation*}
Consider a subset $X$ of $\mathcal A$ that is maximal for the
relation
\begin{equation}\label{maximal set}
  \|Q_n(u-v)\|_{L^2}\le \|P_n(u-v)\|_{L^2} \qqfa u,v \in X.
\end{equation}
Note that if the $P_n$ components of $u$ and $v$ agree, so that
$P_nu=P_nv$, then $Q_nu=Q_nv.$ Hence, for every $u \in X$, we can
define uniquely $\phi_n(P_nu)=Q_nu$ such that $u=P_nu
+\phi_n(P_nu)$. Moreover, it follows from \eqref{maximal set} that
\begin{equation*}
  \|\phi_n(p_1)-\phi_n(p_2)\|_{L^2}\le \|p_1-p_2\|_{L^2} \qqfa p_1,p_2 \in P_nX.
\end{equation*}
Standard results (see Wells and Williams \cite{WW}, for example)
allow one to extend $\phi_n$ to a function $\Phi_n: P_nH \to Q_nH$,
that satisfies the same Lipschitz bound.

Now, if $u \in \mathcal A$ but $u \notin X,$ it follows that $$
\|Q_n(u-v)\|_{L^2}\ge \|P_n(u-v)\|_{L^2},$$ for some $v \in X.$
Thus, if $w=u-v$, then
$$\frac{\lambda_{n+1}^2\|Q_nw\|_{L^2}^2}{\Big(\log\big(M_1^2/\|Q_nw\|_{L^2}^2\big)\Big)^2}
\le 2C^2 \|Q_nw\|_{L^2}^2.$$ Hence, $$\|Q_nw\|_{L^2}^2 \le
M_1^2\e^{-\lambda_{n+1}/\sqrt{2}C},$$ which implies that
\begin{align*}
 \|w\|_{L^2}^2&= \|P_nw\|_{L^2}^2+\|Q_nw\|_{L^2}^2 \le 2 \|Q_nw\|_{L^2}^2\\
&\le 2 M_1^2\e^{-\lambda_{n+1}/\sqrt{2}C}. \end{align*} Therefore,
\begin{equation}\label{dist AIM}
 \dist(u, \mathbf G[\Phi_n]) \le 2 M_1^2\e^{-\lambda_{n+1}/\sqrt{2}C}.
\end{equation}

\end{proof}
\noindent A similar statement would hold if one used the inequality
\eqref{kuka ineq} obtained by Kukavica in \cite{Kuka07}, involving
$A^{1/2}$, rather than \eqref{cond} that considers $A$. However, one
would obtain a worse exponent in \eqref{dist AIM}, since
$\lambda_{n+1}$ would be replaced by $\lambda_{n+1}^{1/2}\le
\lambda_{n+1}$.

To illustrate this result, we consider the incompressible
Navier-Stokes equations
\begin{eqnarray*}\label{NSE}
  \partial_tu-\nu\triangle u+u\cdot\nabla u+\nabla p=F,\\
  \nabla\cdot u=0,
\end{eqnarray*} with periodic boundary conditions on
$\Omega=[0,2\pi]^2$ and initial condition $u(x,0)=u_0(t)$ . Here
$u(x,t)$ is the velocity vector field, $p(x,t)$ the pressure scalar
function, $\nu$ the kinematic viscosity and $F(x,t)$ represents the
volume forces that are applied to the fluid. We restrict ourselves
to the space-periodic case for simplicity. Let $\mathcal H$ be the
space of all the $C^{\infty}$ periodic divergence-free functions
that have zero average on $\Omega$. Let $H$ be the closure of
$\mathcal H$
with scalar
product $(\cdot,\cdot)_{L^2}$ and norm $\|\cdot\|_{L^2}$, and let
$V$ be similarly the closure of
$\mathcal H$
with scalar
product $(\cdot,\cdot)_{H^1}$ and norm $\|\cdot\|_{H^1}$. Let $A$ be
the Stokes operator defined by $$Au=- \triangle u,$$ for all $u$ in
the domain $D(A)$ of $A$ in $H$. Now consider the Navier-Stokes
equations written in its functional form
\begin{equation}\label{NSEfunct}
   \frac{\d u}{\d t} + \nu Au+B(u,u)=F,
   \end{equation}
using the operator $A$ and the bilinear operator $B$ from $V\times
V$ into $V'$ defined by
$$(B(u,v),w)=b(u,v,w), \qqfa u,v,w \in V.$$
If $F \in H$ is independent of time, then the equation
\eqref{NSEfunct} possesses a global attractor
\begin{equation*}
 \mathcal A=\bigg\{u_0 \in H: S(t)u_0 {\rm \ exists \ for \ all \ } t
\in
   \R, \  \sup_{t \in \R}\|S(t)u_0\|_{L^2_{\per}(\Omega)}< \infty\bigg\},
 \end{equation*}
 where $S(t)u_0$ denotes a solution starting at $u_0$ on its maximal
 interval of existence (cf. Constantin and Foias \cite{CF}). Under
 these assumptions, the difference of solutions $w=u-v$ will satisfy
   \begin{equation*}
     \frac{\d w}{\d t} + \nu Aw = - \big[ B(w,u) + B(v,w)\big].
   \end{equation*}
 So, in this case we use Kukavica's Theorem with
 $f=- \big[ B(w,u) + B(v,w)\big] $.
Note that
\begin{equation*}
  \|f\|_{H^1} \le K_1 \|A^{1/2}w\|_{H^1},
\end{equation*}
and consequently
 \begin{equation*}
  \Re(f,w) \ge -K_2 \|w\|_{H^1}\|A^{1/2}w\|_{H^1}.
 \end{equation*}
Therefore, one can apply Proposition \ref{AIM} to the two
dimensional Navier-Stokes equation with forcing $F \in L^2$ to show
the existence of a family of approximate inertial manifolds of
exponential order.

\section{Lipschitz Deviation and Embedding Theorem}

The existence of a family of approximating Lipschitz manifolds for a
dissipative equation of the form of \eqref{eq'} implies that the
global attractor $\mathcal A$ has zero Lipschitz deviation, a
concept that we will define below. In this section, we obtain an
embedding of an attractor $\mathcal A$ into $\R^N$ that has H\"older
continuous inverse, and whose exponent can be made arbitrarily close
to one by choosing an embedding space of sufficiently high
dimension.

In 1999, Hunt and Kaloshin \cite{HK}  found an explicit upper bound
for the H\"older exponent $\alpha$ in \eqref{HK}, based on the
thickness exponent. Later, Olson and Robinson \cite{OR} introduced a
variation of this quantity that measures how well a compact set $X$
in a Hilbert space $H$ can be approximated by graphs of Lipschitz
functions (with prescribed Lipschitz constant) defined over a
finite-dimensional subspace of $H$.
\begin{definition}(Olson and Robinson \cite{OR})\label{dev} Let $X$
be a compact subset of a real Hilbert space $H$. Let $\delta_m(X,
\epsilon)$ be the smallest dimension of a linear subspace $U\subset
H$ such that
\begin{equation*}
{\rm dist}(X,\mathbf G_U[\varphi])<\epsilon,
\end{equation*}
for some $m$-Lipschitz function $\varphi:U\rightarrow U^\perp$, i.e.
\begin{equation*}
\|\varphi(u)-\varphi(v)\|\le m\|u-v\|\qqfa u,v\in U,
\end{equation*}
where $U^\perp$ is orthogonal complement of $U$ in $H$ and $\mathbf
G_U[\varphi]$ is the graph of $\varphi$ over $U$:
\begin{equation*}
\mathbf G_U[\varphi]=\{\,u+\varphi(u):\ u\in U\,\}.
\end{equation*}
The  {\em $m$-Lipschitz deviation} of $X$, $\dev_m(X)$,  is given by
\begin{equation*}
\dev_m(X)=\limsup_{\epsilon\rightarrow0}\frac{\log\delta_m(X,\epsilon)}{-\log\epsilon}.
\end{equation*}
\end{definition}
\noindent (Since this quantity is bounded and non-increasing in $m$,
the limit as $m$ tends to infinity exists and is equal to the
infimum.
Indeed, Pinto de Moura and Robinson \cite{PdeMR09a} define the {\em
Lipschitz deviation} of $X$, $\dev(X)$, via $
\dev(X)=\lim_{m\rightarrow\infty}\dev_m(X). $)
Just as in \cite{PdeMR09a}, we show that the existence of a family
of approximating Lispchitz manifolds, such as that provided by
Proposition \ref{AIM}, implies that the associated global attractor
have zero Lipschitz deviation.

Let $\epsilon_n=2M_1^2\e^{-\lambda_{n+1}/\sqrt{2}C}$. It follows
from Proposition \ref{AIM} that the global attractor $\mathcal A$ is
contained in an $\epsilon_n$-neighbourhood of a finite-dimensional
Lipschitz manifold $\mathcal M$, defined as a graph of $\Phi: PH \to
QH$ , with
\begin{equation*}
  |\Phi(p_1)-\Phi(p_2)|\le|p_1-p_2| \qqfa p_1,p_2 \in PH.
\end{equation*}
Hence, $\delta_1(\mathcal A,\epsilon_n)=n$ and
\begin{equation*}
  \limsup_{n \to \infty} \frac{\log \delta_1(\mathcal
A,\epsilon_n)}{-\log  \epsilon_n}=\limsup_{n\rightarrow \infty}
\frac {\log n}{\sigma\lambda_{n+1}-\log c_0}=0.
\end{equation*}
Therefore, the global attractor $\mathcal A$ for a dynamical system
generated by a partial differential equation of the form \eqref{1}
has $\dev_1(\mathcal A)=0$. Consequently, for a wide class of
parabolic equations that satisfy Proposition \ref{AIM}, one can
apply the following abstract embedding result due to Olson and
Robinson \cite{OR}.
\begin{theorem}[(Olson and Robinson \cite{OR})]\label{emb}
Let $\mathcal A$ be a compact subset of a real Hilbert space $H$
with box-counting dimension $d$ and zero Lipschitz deviation. Let
$N>2d$ be an integer and let $\theta$ be a real number with
\begin{equation}\label{alpha}
0<\theta<1-\frac{2d}{N}.
\end{equation}
Then for a prevalent set of linear maps $L:H\rightarrow\R^N$ there
exists a $C>0$ such that
\begin{equation*}
C|L(x)-L(y)|^{\theta}\ge\|x-y\|\qqfa x,y\in \mathcal A;
\end{equation*}
in particular these maps are injective on $\mathcal A$.
\end{theorem}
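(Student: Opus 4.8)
The statement is an embedding theorem in the sense of prevalence, so the plan is to follow the probabilistic strategy of Hunt and Kaloshin \cite{HK}, replacing their thickness exponent by the Lipschitz deviation as in \cite{OR}. Recall that to prove a property holds for a prevalent set of maps in $\mathcal L(H,\R^N)$ it suffices to exhibit a compactly supported probe measure $\mu$ on a finite-dimensional family of such maps and to show that, for every fixed $L_0$, the set of $L$ for which $L_0+L$ fails the property has $\mu$-measure zero. First I would reformulate the desired inequality on the difference set $Z=\mathcal A-\mathcal A$: writing $z=x-y$, the conclusion $\|x-y\|\le C|L(x-y)|^{\theta}$ is equivalent to the lower bound $|Lz|\ge(\|z\|/C)^{1/\theta}$, so it is natural to work throughout with the single vectors $z\in Z$ and to recall that $\dim_{\mathrm B}(Z)\le 2d$. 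I would take $\mu$ to be (the normalised restriction to a ball of) a Gaussian-type measure on linear maps whose components are built from a fixed orthonormal sequence in $H$.

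The second ingredient is the single-vector estimate: for a fixed nonzero $z$, one shows that $\mu(\{L:|Lz|<\epsilon\})\le C(\epsilon/\|z\|)^{N}\,\Gamma(\|z\|)$, where the exponent $N$ reflects that $Lz$ ranges over $\R^N$ with a density bounded near the origin, and $\Gamma$ is a correction measuring the fraction of $z$ that escapes the finite-dimensional part of the probe. This is where the hypothesis $\dev_1(\mathcal A)=0$ enters: since $\mathcal A$ lies within an $\epsilon$-neighbourhood of a Lipschitz graph over a subspace of dimension $\delta_1(\mathcal A,\epsilon)=\epsilon^{-o(1)}$, any difference $z$ with $\|z\|$ appreciably larger than $\epsilon$ must have a definite fraction of its norm captured by that subspace; consequently $\Gamma(\|z\|)=\|z\|^{-o(1)}$ and does not affect the critical exponent. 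A positive deviation would degrade $\Gamma$ and reproduce the weaker bound of \cite{HK}.

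Third comes a multiscale covering argument, where the box-counting dimension enters. I would decompose $Z$ into annuli $Z_k=\{z\in Z:2^{-k-1}<\|z\|\le 2^{-k}\}$ and cover each $Z_k$ by balls of radius $\rho_k\sim 2^{-k/\theta}$ centred at points $z_i$, which by $\dim_{\mathrm B}(Z)\le 2d$ requires only $\mathcal N_k\le c_\varepsilon\,\rho_k^{-(2d+\varepsilon)}$ balls for any $\varepsilon>0$. The choice of the fine radius $\rho_k$ comparable to the target threshold is forced by the fact that $L$ is bounded on the support of $\mu$, so that the inequality can be transferred from an arbitrary $z\in Z_k$ to the nearest centre up to a negligible error; it is precisely this $\rho_k\sim\|z\|^{1/\theta}$ that produces the sharp exponent. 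Applying the single-vector estimate with $\epsilon_k\sim 2^{-k/\theta}$ to each centre and summing, the total $\mu$-measure of the bad maps at scale $k$ is of order $\rho_k^{-2d}\,(\epsilon_k/2^{-k})^N\sim 2^{k[(2d-N)/\theta+N]}$ (up to the arbitrarily small $\varepsilon$), which is summable in $k$ exactly when $\theta<1-2d/N$. The Borel--Cantelli lemma then shows that, outside a $\mu$-null set of maps, the inequality $|Lz|\ge c\,\|z\|^{1/\theta}$ holds for all $z\in Z$ with a single constant, giving the claimed $\theta$-Hölder bound; injectivity on $\mathcal A$ is immediate on taking $x\neq y$.

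The main obstacle is the reconciliation carried out in the second and third steps: the probe measure $\mu$ must be a fixed finite-dimensional object, yet the single-vector estimate has to remain effective uniformly across all the infinitely many scales $2^{-k}$. The delicate point is to arrange the weighting of the probe components along the orthonormal sequence so that the correction factor $\Gamma(\|z\|)$ stays subcritical simultaneously at every scale, using only that $\delta_1(\mathcal A,\epsilon)$ grows slower than any power of $1/\epsilon$. Getting this balance, together with the matching of the covering radius $\rho_k$ to the threshold $\epsilon_k$ so as to extract exactly the exponent $1-2d/N$ rather than a weaker one, is the crux of the argument; the remaining steps are routine once these estimates are in place.
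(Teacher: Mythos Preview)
The paper does not prove this theorem: it is quoted verbatim from Olson and Robinson \cite{OR} and then applied, so there is no ``paper's own proof'' to compare against. Your sketch is a faithful outline of the prevalence argument actually carried out in \cite{OR} (which in turn adapts Hunt and Kaloshin \cite{HK}, substituting the Lipschitz deviation for the thickness exponent), and the exponent bookkeeping you give, leading to summability precisely when $\theta<1-2d/N$, is correct.
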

\noindent Note that the Lipschitz deviation is used to bound
explicitly the H\"older exponent of the inverse of a linear map $L$
restricted to the image of $\mathcal A$. Therefore, in our case, we
can obtain embeddings of $\mathcal A$ into $\R^N$ that have a
H\"older continuous inverse whose exponent is arbitrarily close to
one by taking $N$ sufficiently large.

\section{Conclusion}

In this paper, we studied conditions under which the global
attractor $\mathcal A$ is a subset of a Lipschitz manifold given as
a graph over a finite-dimensional eigenspace of the linear term $A$.
Then, we showed that, since the linear term of a wide class of
dissipative partial differential equations is $1$-Log-Lipschitz
continuous, the associated global attractor $\mathcal A$ lies within
a small neighbourhood of a finite-dimensional Lipschitz manifold.
Consequently, we are able to obtain linear embeddings of the
attractor into $\R^N$, whose inverse is H\"older continuous with
exponent arbitrarily close to one by choosing $N$ sufficiently
large.

The existence of a system of ordinary differential equation whose
asymptotic behavior reproduces the dynamics on an arbitrary
finite-dimensional global attractor remains an interesting open
problem. Nevertheless, if we are able to show that there exist
exponents $\eta>0$ and $\gamma>0$ such that the vector field on the
attractor $\mathcal A$ is $\eta$-log-Lipschitz and the inverse of
linear embedding $L:H \to \R^N$ is $\gamma$-log-Lipschitz when
restricted to $L\mathcal A$, then we will obtain an embedded
equation $\dot{x}=h(x)$ with unique solution, provided $\eta +
\gamma \le 1$.

Robinson and Olson \cite{OR} showed that, for any $\gamma>3/2$, we
can choose $N$ large enough to obtain a $\gamma$-log-bi-Lipschitz
embedding into $\R^N$. However, this lower bound for the exponent
$\gamma$ is too big to ensure uniqueness of solutions. It follows,
however, from results obtained in \cite{PdeMR09b} that the exponent
$\gamma$ cannot be made smaller than $1/2$. Therefore,
\begin{description}
  \item[-] we would like to improve the exponent $1$ in Corollary
  \ref{logLipscont} and
  \item[-] we would like to reduce the exponent $\gamma$ of the logarithmic term in
        \cite{OR}.
\end{description}

Finally note that if the result for $A$ is optimal, then we need a
bi-Lipschitz embedding to guarantee uniqueness of solutions.
However, that Romanov \cite{Rm} obtained a better regularity result
for the vector field, than we obtained for the linear term $A$,
suggests that it may be possible to improve the logarithmic exponent
in our result.

\section*{Acknowledgments}

EPM is sponsored by CAPES and would like to thank CAPES
for all their support during her PhD. JCR is supported by an EPSRC
Leadership Fellowship EP/G007470/1.


\begin{thebibliography}{99}
\bibitem{Ass}
 {\bibname P. Assouad}, `Plongements Lipschitziens dans
 $\R^n$', {\em Bull. Soc. Math. France} 111 (1983), 429--448.
%
\bibitem{BaVi}
 {\bibname A. V. Babin \and M. I. Vishik}, {\em Attractors of Evolution
 Equations} (North-Holland Publishing Co., Amsterdam, 1992).
%
\bibitem{BaTa}
 {\bibname C. Bardos \and L. Tartar} `Sur l'unicit\'e r\'etrograde des
 \'equations paraboliques et quelques questions voisines' {\em Arch.
 Rational Mech. Anal.} 50 (1973), 10--25.
%
\bibitem{BEFN}
 {\bibname A. Ben-Artzi, A. Eden, C. Foias \and B. Nicolaenko} `H\"older
 continuity for the inverse of Ma\~n\'e's projection' {\em Journal of
 Mathematical Analysis and Applications} 178, no.1 (1993),
 22--29.
%
\bibitem{Bou}
 {\bibname G. Bouligand} `Ensembles impropres et ordre
 dimensionnel' {\em Bull. Sci. Math.} 52 (1928), 320--344 and
 361--376.
%
\bibitem{CF}
 {\bibname P. Constantin \and C. Foias} {\em Navier-Stokes Equations}
 (University of Chicago Press, Chicago, 1988).
%
\bibitem{CFNT}
 {\bibname P. Constantin, C. Foias, Nicolaenko \and R. Temam}
 `Inertial manifolds for nonlinear evolutionary equations' {\em J.
 Differential Equations} 73 (1988), 309--353.
%
\bibitem{DT91a}
 {\bibname A. Debussche \and R. Temam} `Inertial manifolds and the slow
 manifolds in meteorology' {\em Differential and Integral Equations}
 4, no.5 (September, 1991), 897--931.
%
\bibitem{EFNT}
 {\bibname A. Eden, C. Foias, B. Nicolaenko, \and R. Temam} {\em
 Exponential Attractors for Dissipative Evolution Equations} Research
 in applied Mathematics Series (Wiley, New York, 1994).
%
\bibitem{FO}
 {\bibname C. Foias \and E. Olson} `Finite fractal dimension and H\"older-Lipschitz
 parametrization' {\em Indiana Univ. Math. J.} 45, no.3 (1996),
 603--616.
%
\bibitem{FMT}
 {\bibname C. Foias, O. Manley \and R. Temam} `Modelling of the
 interaction of small and large eddies in two-dimensional turbulent
 flows' {\em RAIRO Mod\'el. Math. Anal. Num\'er.} 22, no.1 (1988),
 93--118.
%
\bibitem{FSTa}
 {\bibname C. Foias, G. Sell \and R. Temam} `Vari\'et\'es inertielles
 des \'equations diff\'erentielles dissipatives' {\em C. R. Acad.
 Sci. Paris I} 301 (1985), 139--141.
%
\bibitem{FSTb}
 {\bibname C. Foias, G. R. Sell and R. Temam} `Inertial Manifolds
 for nonlinear evolutionary equations' {\em J. Diff. Eq.} 73 (1988),
 309--353.
%
\bibitem{FT}
 {\bibname C. Foias \and R. Temam} `Some analytic and geometric properties
 of solutions of the Navier-Stokes equations' {\em J. Math. Pures
 Appl.} 58 (1979), 339--365.
%
\bibitem{Ha}
 {\bibname J.K. Hale} {\em Asymptotic Behavior of Dissipative
 Systems} Mathematical Surveys and Monographs Number 25 (American
 Mathematical Society, Providence, RI, 1988).
%
\bibitem{Hen}
 {\bibname D. Henry} {\em Geometric Theory of Semilinear Parabolic
 Equations} Lecture Notes in Mathematics vol. 840 (Springer-Verlag,
 1981).
%
\bibitem{HK}
 {\bibname B. R. Hunt and V. Y. Kaloshin} `Regularity of embeddings
 of infinite-dimensional fractal sets into finite-dimensional spaces'
 {\em Nonlinearity} 12 (1999), 1263--1275.
%
\bibitem{Kuka03}
 {\bibname I. Kukavica} `Fourier Parametrization of Attrators for Dissipative
 Equations in one space dimension' {\em Journal of Dynamics and
 Differential Equations} 15, no.2/3 (July 2003), 473--484.
%
\bibitem{Kuka05}
 {\bibname I. Kukavica} `On Fourier Parametrization of Global Attrators for
 Equations in one space dimension' {\em Discrete and Continuous
 Dynamical Systems} 13, no.3 (2005), 553--560.
%
\bibitem{Kuka07}
 {\bibname I. Kukavica} `Log-log convexity and backward uniqueness'
 {\em Proceeding of the American Mathematical Society} 135, no.8
 (2007), 2415--2421.
%
\bibitem{Luuk}
 {\bibname J. Luukkainen} `Assouad dimension: Antifractal metrization,
 porous sets, and homogeneous measures' {\em J. Korean Math. Soc.} 35
 no.1 (1998), 23--76.
%
\bibitem{M}
 {\bibname R. Man\'e} `On the dimension of the compact invariant sets
 of certain nonlinear maps' {\em Springer Lectures Notes in Math.}
 898 (1981), 230--242.
%
\bibitem{McS}
 {\bibname E. J. McShane} `Extension of the range of functions' {\em
 Bull. Am. Math. Soc.} 40 (1934), 837--842.
%
\bibitem{M-L}
 {\bibname H. Movahedi-Lankarani} `On the inverse of Ma\~n\'e
 projection' {\em Proceeding of the American Mathematical Society}
 116, no.2 (1992), 555--560.
%
\bibitem{Og}
 {\bibname H. Ogawa} `Lower bounds for solutions of differential inequalities in Hilbert
 space' {\em Proc. Amer. Math. Soc.} 16 (1965), 1241--1243.
%
\bibitem{Ol}
 {\bibname E. Olson} `Bouligand dimension and almost Lipschitz
 embeddings' {\em Pacific Journal of Mathematics} 202 (2002),
 459--474.
%
\bibitem{OR}
 {\bibname E. J. Olson \and J. C. Robinson} `Almost bi-Lipschitz
 embeddings and almost homogeneous sets' {\em Transactions of the
 AMS.} 362, no.1 (2010), 145--168.
%
\bibitem{PdeMR09a}
 {\bibname E. Pinto de Moura \and J. C. Robinson} `Global attractors
 with zero Lipschitz deviation' {\em submitted} (2009a).
%
\bibitem{PdeMR09b}
 {\bibname E. Pinto de Moura \and J. C. Robinson}
 `Orthogonal sequences and regularity of embeddings into
 finite-dimensional spaces' {\em submitted} (2009b).
%
\bibitem{Rbook}
 {\bibname J.C. Robinson} {\em Infinite-dimensional dynamical systems.}
 Cambridge Texts in Applied Mathematics (Cambridge University Press,
 Cambridge, 2001).
%
\bibitem{RbICMS}
 {\bibname J. C. Robinson} {\em Attractors and finite-dimensional behaviour
 in the Navier Stokes equations} Instructional Conference
 Mathematical Analysis of Hydrodynamics, June 2003.
%
\bibitem{Rb09}
 {\bibname J. C. Robinson} `Linear embeddings of finite-dimensional
 subsets of Banach spaces into Euclidean spaces' {\em Nonlinearity}
 22 (2009), 711--728.
%
\bibitem{Rm}
 {\bibname A. V. Romanov} `Finite-dimensional limiting dynamics for
 dissipative parabolic equations' {\em Sbornik: Mathematics} 191, no.
 3 (2000), 415--429.
%
\bibitem{SeYo}
 {\bibname G. R. Sell \and Y. You} {\em Dynamics of evolutionary equations}
 Applied Mathematical Sciences vol. 143 (Springer-Verlag, New York,
 2002).
%
%
\bibitem{T}
 {\bibname R. Temam} {\em Infinite-Dimensional Dynamical Systems in
Mechanics and Physics} Springer Applied Mathematical Sciences vol
68, 2nd edition (Springer-Verlag, Berlin, 1997).
%
\bibitem{WW}
 {\bibname J. H. Wells \and L. R. Williams} {\em Embeddings and
extensions in analysis} (Springer-Verlag, Berlin, 1975).
\end{thebibliography}
\end{document}